\providecommand{\U}[1]{\protect\rule{.1in}{.1in}}
\newcommand{\BZ}{{\mathbb {Z}}}
\newcommand{\CH}{{\mathcal {H}}}
\newcommand{\CT}{{\mathcal {T}}}
\newcommand{\Hom}{{\mathrm{Hom}}}
\renewcommand{\Im}{{\mathrm{Im}}}
\newcommand{\Ker}{{\mathrm{Ker}}}
\newcommand{\Spec}{{\mathrm{Spec}}}
\newcommand{\iso}{\stackrel{\sim}{\rightarrow} }
\numberwithin{equation}{section}
\theoremstyle{remark}
\newtheorem{defi}{\rm{\textbf{D\'efinition}}}[section]
\newtheorem{rem}[defi]{\rm{\textbf{Remarque}}}
\theoremstyle{plain}
\newtheorem{thm}[defi]{\rm{\textbf{Th\'eor\`eme}}}
\newtheorem{cor}[defi]{\rm{\textbf{\textbf{Corollaire}}}}
\newtheorem{prop}[defi]{\rm{\textbf{\textbf{Proposition}}}}
\begin{document}


\title[Corrigendum]{Corrigendum \`a ``Sous-groupe de Brauer invariant et obstruction de descente it\'er\'ee'' Algebra \& Number Theory 14 (2020), no. 8, 2151--2183.}

\author{Yang CAO}

\address{Yang CAO \newline 	University of Science and Technology of China,
\newline 96 Jinzhai Road,
 \newline	230026 Hefei, Chine}

 \email{yangcao1988@ustc.edu.cn; yangcao1988@gmail.com}

\date{\today.}


\maketitle

\begin{abstract} Le \S 2 de notre travail \cite{C4} utilise  un \'enonc\'e sur la formule
de  K\"unneth de degr\'e 2 provenant de   l'article  \cite{SZ}. Il \'et\'e remarqu\'e (\cite{SZ1})
que cet \'enonc\'e de \cite{SZ} n'est pas correct.
Nous corrigeons \cite[\S 2]{C4}.  Le reste de \cite{C4} reste correct.
\end{abstract}

\tableofcontents

Tous les r\'esultats de \cite{C4} sont corrects sauf certains  r\'esultats dans \cite[\S 2]{C4}.
Dans \cite[\S 2]{C4}, les \'enonc\'es \cite[Prop. 2.2, Cor. 2.4 et Cor. 2.7]{C4} sont corrects, et ce  sont   les seuls r\'esultats que nous utilisons dans le reste de \cite{C4}.
Mais la formule de K\"unneth de degr\'e 2 (\cite[Prop. 2.6]{C4}) n'est pas correcte.   
Sa d\'emonstration repose sur \cite[Prop. 2.2]{SZ},  qui n'est pas correcte : voir   \cite[Rem. 1.2]{SZ1} pour un contre-exemple. 
Plus pr\'ecis\'ement, le morphisme not\'e par $-\circ -$ dans le premier diagramme commutatif du preuve de \cite[Cor. 2.3]{C4} n'est pas un isomorphisme. 
De ce fait,
 il y a des erreurs dans \cite[Cor. 2.3]{C4} et donc dans \cite[Lem. 2.5 et Prop. 2.6]{C4}.
Dans ce corrigendum, nous corrigeons les \'enonc\'es \cite[Prop. 2.6]{C4} (remplac\'e par le th\'eor\`eme \ref{BiDKthm} ci-dessous) et
  aussi la d\'emonstration de \cite[Cor. 2.7]{C4} (remplac\'e par le corollaire \ref{BiDKcor2.3.2} ci-dessous).

\bigskip

Dans toute cette note,  $k$ est un corps quelconque, $k_s$ une cl\^oture s\'eparable de $k$ et $\Gamma_k:=\mathrm{Gal}(k_s/k)$. Sauf  mention explicite du contraire, une vari\'et\'e est une $k$-vari\'et\'e.
Fixons un entier $n\geq 2$ avec $\mathrm{char}(k)\nmid n$.
Soient $U$, $V$ deux $k$-vari\'et\'es g\'eom\'e\-triquement int\`egres.
On consid\`ere le diagramme commutatif
\begin{equation}\label{BiDdiagprodu}
\xymatrix{U\times_kV \ar[r]^-{p_2}\ar[d]^{p_1} & V\ar[d]^{q_2}\\
U\ar[r]^-{q_1}& \Spec\ k.
}
\end{equation}

\section{Rappels}

Nous rappelons la notion de torseur universel de $n$-torsion, la formule de K\"unneth g\'en\'erale et l'homomorphisme $\varepsilon$ dans \cite[\S 5]{SZ} et dans \cite[(2.9)]{C4}.

\medskip

Soit $X$ une $k$-vari\'et\'e g\'eom\'etriquement int\`egre.
Soit $S_X$  un $k$-groupe fini commutatif tel que $n\cdot S_X=0$ et que $S_X^*:=\Hom_{k_s}(S_X,\mu_n)\cong H^1(X_{k_s},\mu_n)$ comme $\Gamma_k$-modules.
Pour  la notion de \emph{torseur universel de $n$-torsion pour $X$}, nous renvoyons \`a    \cite[D\'ef. 2.1]{C4}. 
La propri\'et\'e fondamentale d'un torseur universel de $n$-torsion est la proposition \ref{BiDprop2.1} ci-dessous.

\begin{prop}(\cite[Prop. 2.2]{C4})\label{BiDprop2.1}
Soit $\CT_X$ un torseur universel de $n$-torsion pour $X$.
Soit $S$ un $k$-groupe fini commutatif tel que $n\cdot S=0$.
Alors, pour tout $S$-torseur $Y$ sur $X$, il existe un unique homomorphisme $\phi: S_X\to S $ tel que
$$\phi_*([\CT_X])-[Y]\in \Im (H^1(k,S)\to H^1(X,S)). $$
\end{prop}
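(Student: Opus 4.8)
The plan is to pass to the geometric fibre via the Hochschild--Serre spectral sequence, identify the restriction of $H^1(X,S)$ with a $\Hom$-group using the defining property of $S_X$, and then observe that an $S$-torsor is determined modulo $\Im(H^1(k,S)\to H^1(X,S))$ by a single invariant, its \emph{type}.

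First I would compute the geometric cohomology. As $X$ is geometrically int\`egre, $X_{k_s}$ is connected, so $H^0(X_{k_s},S)=S$ as $\Gamma_k$-modules. For $H^1$ I would exploit the defining property $S_X^*=\Hom_{k_s}(S_X,\mu_n)\cong H^1(X_{k_s},\mu_n)$. Writing $S\cong\Hom_{k_s}(S^*,\mu_n)$ by Cartier duality (legitimate since $n\cdot S=0$ and $\mathrm{char}(k)\nmid n$) and factoring the constant group $S^*$ out of the cohomology over the separably closed base gives, as $\Gamma_k$-modules,
\[
H^1(X_{k_s},S)\cong\Hom_{k_s}(S^*,H^1(X_{k_s},\mu_n))\cong\Hom_{k_s}(S^*,S_X^*)\cong\Hom_{k_s}(S_X,S),
\]
the last step being the duality involution $\Hom_{k_s}(-,\mu_n)$ on finite $n$-torsion groups. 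Taking $\Gamma_k$-invariants yields $H^1(X_{k_s},S)^{\Gamma_k}\cong\Hom_k(S_X,S)$.

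Next I would feed this into the low-degree exact sequence of the Hochschild--Serre spectral sequence $H^p(k,H^q(X_{k_s},S))\Rightarrow H^{p+q}(X,S)$, which reads
\[
0\to H^1(k,S)\to H^1(X,S)\xrightarrow{\ r\ }\Hom_k(S_X,S)\to H^2(k,S),
\]
where the first map is the pullback along $X\to\Spec k$, whose image is exactly the $\Im(H^1(k,S)\to H^1(X,S))$ of the statement, and $r$ sends a torsor to its type (the restriction of its class to $X_{k_s}$). Exactness at the middle gives $\ker r=\Im(H^1(k,S)\to H^1(X,S))$. Invoking the definition of the universal $n$-torsion torsor, $\CT_X$ is an $S_X$-torsor with $r([\CT_X])=\id_{S_X}\in\End_k(S_X)$. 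Given $Y$, I set $\phi:=r([Y])$. Since $r$ is functorial in the coefficient group and push-forward along $\phi$ corresponds on $X_{k_s}$ to post-composition by $\phi$, I get $r(\phi_*[\CT_X])=\phi\circ r([\CT_X])=\phi=r([Y])$, hence $\phi_*[\CT_X]-[Y]\in\ker r$, which is existence. For uniqueness I note that $\psi\mapsto\psi_*[\CT_X]$ is additive, so $r(\psi_*[\CT_X])=\psi$ for all $\psi$; thus $\psi_*[\CT_X]-[Y]\in\ker r$ forces $\psi=r([Y])=\phi$.

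The cohomological bookkeeping (functoriality of $r$, additivity of $\psi\mapsto\psi_*[\CT_X]$, the normalization $r([\CT_X])=\id$) is formal. The step demanding the most care is the geometric identification $H^1(X_{k_s},S)\cong\Hom_{k_s}(S_X,S)$: one must check that the constant group $S^*$ really factors out of $H^1(X_{k_s},\mu_n)$ and, crucially, that the whole chain of isomorphisms is $\Gamma_k$-equivariant, since it is precisely this equivariance that makes $\Hom_k(S_X,S)$ the correct target of the type map $r$ after passing to invariants. Once this is secured, the statement is an immediate consequence of the five-term exact sequence.
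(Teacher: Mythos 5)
This proposition is merely quoted here from \cite[Prop.\ 2.2]{C4}; the corrigendum itself contains no proof of it, the cited proof in the original paper being the standard one. Your argument --- identifying $H^1(X_{k_s},S)$ with $\Hom_{k_s}(S_X,S)$ $\Gamma_k$-equivariantly and then reading off existence and uniqueness of the $n$-type from the five-term Hochschild--Serre exact sequence, with $r([\CT_X])=\id_{S_X}$ as normalization --- is correct and is essentially that same standard route (going back to \cite{SZ}), with the only step needing real verification being the one you yourself flag, namely that $S^*$ factors out of $H^1(X_{k_s},\mu_n)$ compatibly with the Galois action.
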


L'homomorphisme $\phi$ dans la proposition \ref{BiDprop2.1} est appel\'e \emph{le n-type de $[Y]$}.
Il induit un isomorphisme de $\Gamma_k$-modules:
\begin{equation}\label{BiDprop2.1e3}
\tau_{X,S}: \Hom_{k_s}(S_X,S) \to H^1(X_{k_s},S): \phi\mapsto \phi_*([\CT_X]).
\end{equation}
Par exemple, $\tau_{X,\mu_n}: \Hom_{k_s}(S_X,\mu_n) \iso H^1(X_{k_s},\mu_n)$ est un isomorphisme.

Rappelons que $H^1(X_{k_s},S)$ est fini.

De plus, si $X(k)\neq \emptyset$, alors pour chaque $x\in X(k)$
il existe alors un unique torseur universel de $n$-torsion $\CT_X$ pour $X$ tel que $x^*[\CT_X]=0\in H^1(k,S_X)$ (cf. \cite[p. 2157]{C4}).

\bigskip

Consid\'erons le diagramme commutatif (\ref{BiDdiagprodu}).

Rappelons la formule de K\"unneth dans \cite{SGA4.5}.

Soit $m$ un entier avec $m|n$. Soit $D^-(k)$ (resp. $D^+(k)$) la cat\'egorie d\'eriv\'ee  born\'ee \`a droite (resp. \`a gauche) de la cat\'egorie des $\BZ/m$-faisceaux \'etales sur le petit site de $\Spec\ k$.

Soient $M$, $N$ deux $k$-groupes finis commutatifs tels que $M(k_s),N(k_s)$ soient des $\BZ/m$-modules plats.
 Alors $M\otimes^L_{\BZ/m} N=M\otimes_{\BZ/m} N$ et le cup-produit donne un quasi-isomorphisme (\cite[Th. finitude Cor. 1.11]{SGA4.5} ou \cite[Cor. 9.3.5]{Fu}):
 $$\cup: \ Rq_{1,*}M\otimes^L_{\BZ/m} Rq_{2,*}N \cong  R(q_1\circ p_1)_* (M\otimes_{\BZ/m} N)$$
dans $D^-(k)$.
Ceci induit le cup-produit (\cite[Prop. 6.4.12]{Fu}):
$$\cup_j: \oplus_{r+s=j} R^rq_{1,*}M\otimes_{\BZ/m} R^sq_{2,*}N \to \CH^j(Rq_{1,*}M\otimes^L_{\BZ/m} Rq_{2,*}N) \iso R^j(q_1\circ p_1)_* (M\otimes_{\BZ/m} N).$$

Dans le cas o\`u $m=p$ est un nombre premier, puisque $\BZ/p$ est un corps, tout $\BZ/p$-module est plat et $-\otimes_{\BZ/p}^L-=-\otimes_{\BZ/p}-$. Le cup-produit ci-dessus induit un isomorphisme pour tout $j$:
 \begin{equation}\label{BiDKcupprod}
 \cup_j: \oplus_{r+s=j} R^rq_{1,*}M\otimes_{\BZ/p} R^sq_{2,*}N \iso  R^j(q_1\circ p_1)_* (M\otimes_{\BZ/p} N). 
 \end{equation}

\bigskip

Soit $S$ un $k$-groupe fini commutatif avec $n\cdot S=0$.

En appliquant \cite[Th. finitude Thm. 1.9 (ii)]{SGA4.5} au diagramme (\ref{BiDdiagprodu}) on obtient :
\begin{equation}\label{BiDKbasechange}
Rp_{1,*}p_2^*(S)\cong q_1^*Rq_{2,*}(S).
\end{equation}
En fait, pour appliquer ce th\'eor\`eme, on choisit $X=V$, $Y=S=Spec\ k$, $S'=U$ pour les $X,Y,S,S'$ dans \cite[Th. finitude Thm. 1.9]{SGA4.5}.

\bigskip

Rappelons maintenant l'homomorphisme $\varepsilon$ de Skorobogatov et Zarhin.

Supposons qu'il existe des torseurs universels de $n$-torsion $\CT_U$ pour $U$ (sous le groupe $S_U$) et $\CT_V$ pour $V$ (sous le groupe $S_V$).
Skorobogatov et Zarhin introduisent un homomorphisme (\cite[\S 5]{SZ}):
\begin{equation}\label{BiDvarepsilion}
\varepsilon': \Hom_k(S_U\otimes_{\BZ/n} S_V,S)\to H^2(U\times V,S): \phi\mapsto \phi_*([\CT_U]\cup [\CT_V]),
\end{equation}
o\`u $\cup$ est le cup-produit $H^1(U,S_U)\times H^1(V, S_V)\to H^2(U\times V,S_U\otimes_{\BZ/n} S_V)$.

Dans le cas o\`u $S:=M\otimes_{\BZ/n}N\cong M\otimes_{\BZ/m}N$ (puisque $m|n$), on a le diagramme commutatif:
\begin{equation}\label{BiDKcommudiag}
\xymatrix{\Hom(S_U,M)\otimes_{\BZ/n}\Hom(S_V,N)\ar[r]^-{\xi}\ar[d]^{\tau_{U,M}\otimes \tau_{V,N}}_{\cong}&\Hom(S_U\otimes_{\BZ/n}S_V, M\otimes_{\BZ/n}N)\ar[r]^-= &\Hom(S_U\otimes_{\BZ/n}S_V, S)\ar[d]^{\varepsilon'}\\
  H^1(U,M)\otimes_{\BZ/n}H^1(V,N) \ar[r]^-{\cong}  & H^1(U,M)\otimes_{\BZ/m}H^1(V,N)\ar[r]^-{\cup_2}& H^2(U\times V,S)
}
\end{equation}
o\`u $\tau_{U,M}\otimes \tau_{V,N}$ est induit par (\ref{BiDprop2.1e3}) qui est donc un isomorphisme, $\cup_2$ est induit par (\ref{BiDKcupprod}) 
et $\xi(\phi_1,\phi_2)=\phi_1\otimes_{\BZ/n}\phi_2$.

Ce diagramme est commutatif car, pour tout $\phi_1\in \Hom(S_U,M),\phi_2\in \Hom(S_V,N)$, on a
$$\cup_2\circ (\tau_{U,M}\otimes \tau_{V,N}) (\phi_1,\phi_2  )=\phi_{1,*}[\CT_U]\cup \phi_{2,*}[\CT_V]\stackrel{(1)}{=}(\phi_1\otimes\phi_2)_*([\CT_U]\cup [\CT_V])
=\varepsilon'(\phi_1\otimes\phi_2)=(\varepsilon'\circ \xi)(\phi_1,\phi_2),  $$
o\`u (1) d\'ecoule du diagramme commutatif
$$ \xymatrix{H^1(U,S_U)\ar[d]^{\phi_{1,*}}\ar@{}[r]|-{\times}& H^1(V,S_V)\ar[d]^{\phi_{2,*}} \ar[r]^-{\cup}& H^2(U\times V,S_U\otimes_{\BZ/n}S_V)\ar[d]^{(\phi_1\otimes\phi_2)_*}\\
H^1(U,M)\ar@{}[r]|-{\times}& H^1(V,N) \ar[r]^-{\cup}& H^2(U\times V,S).
} $$

\begin{rem}\label{BiDKrem}
Dans le cas o\`u $S=\mu_n$, le $\varepsilon'$ dans (\ref{BiDvarepsilion}) est essentialement le $\varepsilon$ dans \cite[\S 5]{SZ} et dans \cite[(2.9)]{C4}.
Plus pr\'ecis\'ement, $S^*_V:=\Hom_{k_s}(S_V,\mu_n)$ et
$$\varepsilon: \Hom_k(S_U,S_V^*)\to H^2(U\times V,\mu_n): \phi\mapsto \eta_*(\phi_*[\CT_U]\cup [\CT_V]),$$
o\`u $\eta: S_V^*\otimes_{\BZ/n} S_V\to \mu_n$ est le morphisme d'\'evaluation.
Dans ce cas, on a 
$$ \varepsilon \circ \varphi=\varepsilon',$$ 
o\`u $\varphi: \Hom_k(S_U\otimes_{\BZ/n} S_V,\mu_n)\iso \Hom_k(S_U,\Hom_{k_s}(S_V,\mu_n))=\Hom_k(S_U,S_V^*)$ 
est l'isomorphisme canonique induit par les foncteurs adjoints $-\otimes_{\BZ/p}S_V$ et $\Hom_{k_s}(S_V,-)$.

Ceci vaut car, pour tout $\phi\in \Hom_k(S_U,S_V^*)$, on a $\varphi^{-1}(\phi)=\eta\circ (\phi\otimes id_{S_V})$ et donc 
$$\varepsilon (\phi)=\eta_*(\phi_*[\CT_U]\cup [\CT_V])=\eta_*((\phi\otimes id_V)_*([\CT_U]\cup [\CT_V]))=\varphi^{-1}(\phi)_*([\CT_U]\cup [\CT_V])=(\varepsilon'\circ \varphi^{-1})(\phi).$$
\end{rem}

\section{Th\'eor\`eme principal}

Voici la version correcte de la formule de K\"unneth de degr\'e 2, 
qui remplace \cite[Prop. 2.6]{C4}.

\begin{thm}\label{BiDKthm}
Supposons que $k$ est s\'eparablement clos.
Soient $U$, $V$ deux $k$-vari\'et\'es g\'eom\'e\-triquement int\`egres et $S$ un $\BZ/n$-module fini (vu comme un $k$-groupe commutatif).
Consid\'erons le diagramme (\ref{BiDdiagprodu}) et l'homomorphisme $\varepsilon'$ dans (\ref{BiDvarepsilion}) ci-dessus.
 Alors on a des isomorphismes:
$$\psi^1(S):\ \ H^1(U,S)\oplus H^1(V,S)\xrightarrow{(p_1^*,p_2^*)} H^1(U\times V,S), $$
$$\psi^2(S):\ \ H^2(U,S)\oplus H^2(V,S)\oplus \Hom(S_U\otimes_{\BZ/n} S_V,S)\xrightarrow{(p_1^*,p_2^*,\varepsilon')}H^2(U\times V,S),$$
et un homomorphisme injectif $\psi^3(S):\ \ H^3(U,S)\oplus H^3(V,S)\xrightarrow{(p_1^*,p_2^*)} H^3(U\times V,S).$
\end{thm}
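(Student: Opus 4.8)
L'approche que je proposerais repose sur la suite spectrale de Leray de la projection $p_1\colon U\times_k V\to U$,
$$E_2^{r,s}=H^r(U,R^sp_{1,*}p_2^*S)\ \Longrightarrow\ H^{r+s}(U\times_k V,S),$$
dont le terme $E_2$ se calcule gr\^ace au changement de base (\ref{BiDKbasechange}), et dont la d\'eg\'en\'erescence dans les degr\'es utiles provient de l'existence de points rationnels. Comme $k$ est s\'eparablement clos et que $U,V$ sont g\'eom\'etriquement int\`egres, donc g\'eom\'etriquement r\'eduites, leurs lieux lisses sont des ouverts denses non vides ; un $k$-sch\'ema lisse non vide sur un corps s\'eparablement clos poss\'edant un point rationnel, on peut fixer $u_0\in U(k)$ et $v_0\in V(k)$. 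Ceux-ci fournissent des sections $s=(\id_U,v_0)\colon U\to U\times V$ de $p_1$ et $s'=(u_0,\id_V)\colon V\to U\times V$ de $p_2$. Ce sont ces sections qui remplaceront l'\'enonc\'e erron\'e de type K\"unneth.

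La premi\`ere \'etape serait le calcul du terme $E_2$. Par (\ref{BiDKbasechange}), et parce que $R^sq_{2,*}S$ est, sur $\Spec k$ s\'eparablement clos, le module $H^s(V,S)$, le faisceau $R^sp_{1,*}p_2^*S\cong q_1^*R^sq_{2,*}S$ est le faisceau constant $\underline{H^s(V,S)}$ sur $U$. Comme $U$ et $V$ sont connexes, on trouve $E_2^{r,0}=H^r(U,S)$ et $E_2^{0,s}=H^s(V,S)$. Pour le terme crois\'e, l'isomorphisme $\tau_{V,S}$ de (\ref{BiDprop2.1e3}) donne $H^1(V,S)\cong\Hom(S_V,S)$, puis $\tau_{U,\Hom(S_V,S)}$ et l'adjonction tenseur-Hom fournissent
$$E_2^{1,1}=H^1\big(U,\underline{\Hom(S_V,S)}\big)\cong\Hom\big(S_U,\Hom(S_V,S)\big)\cong\Hom(S_U\otimes_{\BZ/n}S_V,S).$$
Aucune hypoth\`ese de platitude sur $S$ n'intervient ici, ce qui est l'int\'er\^et de cette m\'ethode.

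La deuxi\`eme \'etape serait la d\'eg\'en\'erescence. Puisque $s^*\circ p_1^*=\id$, la fl\`eche d'ar\^ete $E_2^{r,0}\twoheadrightarrow E_\infty^{r,0}\hookrightarrow H^r(U\times V,S)$, qui n'est autre que $p_1^*$, est injective ; la projection $E_2^{r,0}\to E_\infty^{r,0}$ est donc un isomorphisme, ce qui force la nullit\'e de toutes les diff\'erentielles aboutissant \`a la ligne du bas. S'annulent ainsi $d_2\colon E_2^{0,1}\to E_2^{2,0}$, $d_2\colon E_2^{1,1}\to E_2^{3,0}$ et $d_3\colon E_3^{0,2}\to E_3^{3,0}$. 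La seule diff\'erentielle g\^enante restante en bas degr\'e est $d_2\colon E_2^{0,2}\to E_2^{2,1}$ : elle est tu\'ee par $s'$, car la restriction \`a la fibre $p_1^{-1}(u_0)\cong V$ montre que $H^2(V,S)\xrightarrow{p_2^*}H^2(U\times V,S)\to E_\infty^{0,2}\hookrightarrow E_2^{0,2}=H^2(V,S)$ est l'identit\'e, d'o\`u $E_\infty^{0,2}=E_2^{0,2}$ et $d_2|_{E_2^{0,2}}=0$. On obtient $E_\infty^{r,0}=H^r(U,S)$, $E_\infty^{1,1}=\Hom(S_U\otimes_{\BZ/n}S_V,S)$ et $E_\infty^{0,s}=H^s(V,S)$ pour les couples intervenant en degr\'es $\le 3$.

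Enfin, pour l'assemblage, on v\'erifie que $\psi^1$ et $\psi^2$ respectent la filtration de Leray et induisent des isomorphismes sur chaque gradu\'e : $p_1^*$ identifie $H^i(U,S)$ au cran profond $E_\infty^{i,0}$, $p_2^*$ induit l'identit\'e sur $E_\infty^{0,i}$ par restriction \`a la fibre, ce qui entra\^ine que ce sont des isomorphismes ; pour $\psi^3$, les m\^emes arguments donnent l'injectivit\'e de $p_1^*$ et $p_2^*$, et l'injectivit\'e de $\psi^3$ r\'esulte de l'application successive de $s^*$ et $s'^*$ \`a une relation $p_1^*\alpha+p_2^*\beta=0$, via $H^j(k,S)=0$ pour $j\ge1$. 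Le point d\'elicat, et l'obstacle principal, sera l'identification du gradu\'e interm\'ediaire avec $\varepsilon'$ : en \'ecrivant $\varepsilon'(\phi)=\phi_*\big(p_1^*[\CT_U]\cup p_2^*[\CT_V]\big)$, le symbole de $p_1^*[\CT_U]$ dans $E_\infty^{1,0}$ est $\id_{S_U}$ (car $[\CT_U]=\tau_{U,S_U}(\id)$) et celui de $p_2^*[\CT_V]$ dans $E_\infty^{0,1}$ est $\id_{S_V}$ ; par multiplicativit\'e de la suite spectrale le symbole du cup-produit dans $E_\infty^{1,1}$ est $\id_{S_U\otimes S_V}$, de sorte que $\phi_*$ y envoie $\phi$. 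L'application induite par $\varepsilon'$ sur $E_\infty^{1,1}=\Hom(S_U\otimes_{\BZ/n}S_V,S)$ est donc l'identit\'e. C'est la compatibilit\'e du cup-produit avec la structure multiplicative de la suite spectrale, contr\^ol\'ee par le diagramme (\ref{BiDKcommudiag}), qui demandera le plus de soin.
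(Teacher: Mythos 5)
Votre d\'emonstration suit la m\^eme strat\'egie que celle du texte dans toute sa premi\`ere moiti\'e : suite spectrale de Leray de $p_1$, calcul du terme $E_2$ via le changement de base (\ref{BiDKbasechange}) et l'isomorphisme (\ref{BiDprop2.1e3}), d\'eg\'en\'erescence en bas degr\'e forc\'ee par les sections issues des points rationnels. Vos deux premi\`eres \'etapes sont exactement les points (i)--(iii) de la preuve du texte, et votre traitement de $\psi^1(S)$ et $\psi^3(S)$ est correct. La divergence porte sur l'\'etape d\'ecisive : identifier l'application induite par $\varepsilon'$ sur le gradu\'e interm\'ediaire $E_\infty^{1,1}\cong\Hom(S_U\otimes_{\BZ/n}S_V,S)$. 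Vous proposez de le faire directement par multiplicativit\'e de la suite spectrale (le symbole de $p_1^*[\CT_U]\cup p_2^*[\CT_V]$ serait le produit des symboles, d'o\`u l'identit\'e sur $E_\infty^{1,1}$). Cette voie est viable et aurait l'avantage d'\'eviter toute r\'ecurrence sur $|S|$ ; mais, telle que vous l'\'ecrivez, elle repose sur deux faits affirm\'es sans d\'emonstration : l'existence d'un accouplement de suites spectrales compatible au cup-produit sur l'aboutissement et aux filtrations, et l'identification de l'accouplement induit sur $E_2$ avec le compos\'e du cup-produit sur $U$ et de l'accouplement naturel $R^sp_{1,*}A\otimes R^{s'}p_{1,*}B\to R^{s+s'}p_{1,*}(A\otimes B)$. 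C'est pr\'ecis\'ement le genre de compatibilit\'e ``bien connue'' dont la v\'erification trop rapide est \`a l'origine de l'erreur de \cite{SZ} que ce corrigendum r\'epare ; vous signalez vous-m\^eme que c'est le point d\'elicat, mais il reste enti\`erement \`a r\'ediger. La d\'emonstration du texte contourne ce point : elle observe que $|E_2^{1,1}|=|\Hom(S_U\otimes_{\BZ/n}S_V,S)|=|H^2_{prim}(S)|$, de sorte qu'il suffit de montrer que $\varepsilon'$ est \emph{injectif} ; l'injectivit\'e est \'etablie pour $|S|=p$ premier \`a l'aide de la formule de K\"unneth (\ref{BiDKcupprod}), valable sans hypoth\`ese de platitude sur le corps $\BZ/p$, via le diagramme (\ref{BiDKcommudiag}), puis propag\'ee au cas g\'en\'eral par d\'evissage $0\to S_1\to S\to S_2\to 0$ et lemme des cinq. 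Si vous tenez \`a votre voie, il vous faut \'etablir soigneusement la compatibilit\'e multiplicative ; sinon, l'argument de cardinalit\'e du texte permet de conclure sans elle.
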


\begin{proof}
Puisque $k$ est s\'eparablement clos, $U(k)\neq\emptyset$ et $V(k)\neq\emptyset$. 
Fixons deux points $u\in U(k),v\in V(k)$.
 L'existence des sections 
 $$l_v: U\times v\to U\times V,\ \ \ \ l_u: u\times V\to U\times V$$
  de $p_1,p_2$ impliquent que
 l'homomorphisme $(p_1^*,p_2^*):\ H^i(U,S)\oplus H^i(V,S)\to H^i(U\times V,S)$ est injectif et scind\'e pour tout $i\geq 1$.
 Soit $$H^2_{prim}(S):=\Ker(H^2(U\times V,S)\xrightarrow{(l_v^*,l_u^*)}H^2(U,S)\oplus H^2(V,S)).$$
 Puisque $[\CT_U]|_u=0, [\CT_V]|_v=0$, on a $l_u^*\circ \varepsilon'=0$, $l_v^*\circ \varepsilon'=0$ et donc $\Im(\varepsilon')\subset H^2_{prim}(S).$
 
Consid\'erons la suite spectrale $E_2^{i,j}=R^iq_{1,*}R^jp_{1,*}(S)\Rightarrow H^{i+j}(U\times V,S)$. On a:

(i) Par d\'efinition, l'homomorphisme $$H^i(U,S)=R^iq_{1,*}(S)=E_2^{i,0}\to H^{i}(U\times V,S)$$ est exactement $p_1^*:H^i(U,S)\to H^{i}(U\times V,S)$, qui est injectif et scind\'e.

(ii) Puisque 
\begin{equation}\label{BiDKeq2}
Rp_{1,*}(S)=Rp_{1,*}p_2^*(S)\stackrel{(\ref{BiDKbasechange})}{\cong} q_1^*Rq_{2,*}(S)=q_1^*R\Gamma(V,S),
\end{equation}
tout $R^jp_{1,*}(S)$ est le faisceau constant $H^j(V,S)$.
La composition 
$$H^j(V,S)\to H^j(U\times V,S)\to E_2^{0,j}=q_{1,*} (H^j(V,S))=H^j(V,S)$$
est l'identit\'e, puisqu'elle  est l'identit\'e sur $u\times V$.
Donc $H^j(U\times V,S)\to E_2^{0,j}$ est surjectif et scind\'e.

(iii) D'apr\`es (\ref{BiDprop2.1e3}), $E_2^{1,1}=H^1(U, H^1(V,S))\cong \Hom(S_U,\Hom(S_V,S))\cong \Hom(S_U\otimes_{\BZ/n}S_V,S)$,
qui est un groupe fini. Ainsi $|E_2^{1,1}|=|\Hom(S_U\otimes_{\BZ/n}S_V,S)|.$

D'apr\`es (i) et (ii),
$$H^1(U\times V,S)\cong E_2^{1,0}\oplus E_2^{0,1},\ \ \ H^2(U\times V,S)\cong E_2^{2,0}\oplus E_2^{0,2}\oplus E_2^{1,1},$$
 et, dans ces sommes directes, $(p_1^*,p_2^*)$ induit les isomorphismes $H^j(U,S)\oplus H^j(V,S)\cong E_2^{j,0}\oplus E_2^{0,j}$ pour $j=1,2$.
Alors $H^2_{prim}(S)\cong E_2^{1,1}$ et, d'apr\`es (iii), $|\Hom(S_U\otimes_{\BZ/n}S_V,S)|=|H^2_{prim}(S)|$.
 Ainsi $\varepsilon': \Hom(S_U\otimes_{\BZ/n}S_V,S)\to H^2_{prim}(S)$ est un isomorphisme si et seulement si $\varepsilon'$ est injectif.
 
On a donc montr\'e :

 (iv) $\psi^1(S)$ est un isomorphisme et $\psi^3(S)$ est injectif.
 
 (v) $\psi^2(S)$ est un isomorphisme si et seulement si $\psi^2(S)$ est injectif, si et  seulement si $\varepsilon'$ est injectif.  
 
 \medskip
 
 Dans le cas o\`u  $|S|$ est un nombre premier $p$, on a $S\cong \BZ/p$ et
on consid\`ere le diagramme (\ref{BiDKcommudiag}) avec $M\cong N\cong \BZ/p$.
Dans ce diagramme, $\xi$ est un isomorphisme et, d'apr\`es (\ref{BiDKcupprod}), $\cup_2$ est injectif. 
 Alors $\varepsilon'$ est injectif et, d'apr\`es (v), $\psi^2(S)$ est un isomorphisme.

 Dans le cas g\'en\'eral,   par r\'ecurrence sur  $|S|$, on peut supposer que $\psi^2(S')$ est un isomorphisme pour tout $\BZ/n$-module fini $S'$ avec $|S'|<|S|$.
Alors si $|S|$ n'est pas un nombre premier, il existe une suite exacte $0\to S_1\to S\to S_2\to 0$ de $\BZ/n$-modules finis avec $S_1,S_2\neq 0$.
Ceci induit une suite exacte 
$$0\to \Hom(S_U\otimes_{\BZ/n} S_V,S_1)\to \Hom(S_U\otimes_{\BZ/n} S_V,S)\to \Hom(S_U\otimes_{\BZ/n} S_V,S_2)$$
 et donc un diagramme commutatif de suites exactes:
$$\xymatrix{H^1(U,S_2)\oplus H^1(V,S_2)\ar[r]^-{f_1}\ar[d]^{\psi^1(S_2)}&F(S_1)\ar[r]\ar[d]^{\psi^2(S_1)}&F(S)\ar[r]\ar[d]^{\psi^2(S)}&
F(S_2)\ar[d]^{\psi^2(S_2)}\\
H^1(U\times V,S_2)\ar[r]&H^2(U\times V,S_1)\ar[r]&H^2(U\times V,S)\ar[r]&H^2(U\times V,S_2).
}$$
o\`u $F(S):=H^2(U,S)\oplus H^2(V,S)\oplus \Hom(S_U\otimes_{\BZ/n} S_V,S)$ et $f_1$ est la composition
$$H^1(U,S_2)\oplus H^1(V,S_2)\xrightarrow{(\partial_U,\partial_V,0)} H^2(U,S_1)\oplus H^2(V,S_1)\oplus \Hom(S_U\otimes_{\BZ/n} S_V,S_1) = F(S_1) .$$
 Puisque $|S_1|, |S_2|<|S|$, par r\'ecurrence, $\psi^2(S_1),\psi^2(S_2)$ sont des isomorphismes.
D'apr\`es (iv), $\psi^1(S_2)$ est un isomorphisme. Par (v) et le lemme des cinq, $\psi^2(S)$ est injectif et donc un isomorphisme, ce qui \'etablit le th\'eor\`eme.
\end{proof}

 Soient $U$, $V$ deux vari\'et\'es g\'eom\'e\-triquement int\`egres sur $k$.
On consid\`ere le diagramme commutatif (\ref{BiDdiagprodu}) ci-dessus.
 Soit $S$ un $k$-groupe fini commutatif avec $n\cdot S=0$.

Si $U(k)\neq\emptyset$, alors il existe un torseur universel de $n$-torsion pour $U$.
Un point $u\in U(k)$ induit $u^*: Rq_{1,*}S\to S$ et un morphisme surjectif $u^*: H^i(U,S)\to H^i(k,S)$ pour tout $i$.
Notons
$$H^i_u(U,S):=\Ker (H^i(U,S)\xrightarrow{u^*}H^i(k,S)).$$

\begin{cor}\label{BiDKcor2.3.2} 
Sous les notations et hypoth\`eses ci-dessus,
supposons que $U(k)\neq\emptyset$, et qu'il existe des torseurs universels de $n$-torsion $\CT_U, \CT_V$ comme ci-dessus.
Soit    $u\in U(k)$.
Alors, pour tout $k$-groupe fini commutatif $S$ avec $n\cdot S=0$, on a un isomorphisme:
$$H^2_u(U,S)\oplus H^2(V,S)\oplus \Hom_k(S_U\otimes_{\BZ/n}S_V,S)\xrightarrow{(p_1^*,p_2^*,\varepsilon') }H^2(U\times V,S).$$
\end{cor}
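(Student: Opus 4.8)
The plan is to descend Theorem \ref{BiDKthm} from $k_s$ to $k$ by means of the Hochschild--Serre spectral sequences, using the rational point $u$ to break the symmetry between $U$ and $V$. First, since $U(k)\neq\emptyset$, I would normalize $\CT_U$ to be the universal $n$-torsion torsor with $u^*[\CT_U]=0$; replacing $\CT_U$ by a twist coming from $H^1(k,S_U)$ changes $\varepsilon'$ only by a homomorphism with image in $\Im(p_2^*)$, hence composes $(p_1^*,p_2^*,\varepsilon')$ with an automorphism of the source and is harmless. For this normalization, the section $l_u\colon V\to U\times V$ attached to $u$ satisfies $l_u^*\circ p_2^*=\id$, $l_u^*\circ p_1^*=q_2^*\circ u^*$, and $l_u^*\circ\varepsilon'=0$, the last two because $p_1\circ l_u$ is the constant map through $u$ and $u^*[\CT_U]=0$ (cf. the proof of Theorem \ref{BiDKthm}).

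Since $p_2\circ l_u=\id_V$, the map $p_2^*$ is split injective and $H^2(U\times V,S)=\Im(p_2^*)\oplus\Ker(l_u^*)$; so it suffices to show that $(p_1^*,\varepsilon')$ maps $H^2_u(U,S)\oplus\Hom_k(S_U\otimes_{\BZ/n}S_V,S)$ isomorphically onto $\Ker(l_u^*)$. As $l_u^*$ is a morphism of Hochschild--Serre spectral sequences split by $p_2^*$, its kernel $\CK$ is a direct summand sub--spectral sequence of that of $U\times V$. I would compute its $E_2$-page from the $\Gamma_k$-equivariant splitting provided by Theorem \ref{BiDKthm}: over $k_s$ one has $H^q(\overline{U\times V},S)\cong H^q(\bar U,S)\oplus H^q(\bar V,S)\oplus C^q$ with $C^2=\Hom_{k_s}(S_U\otimes_{\BZ/n}S_V,S)$ and $C^q=0$ for $q\neq2$, and for $q\geq1$ the map $l_u^*$ acts by $0$, $\id$, $0$ on the three summands. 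Hence $\CK_2^{p,q}=H^p(k,H^q(\bar U,S))\oplus H^p(k,C^q)$ for $q\geq1$ and vanishes for $q=0$; this is exactly the $E_2$-page of the sum of the reduced Hochschild--Serre spectral sequence of $U$ (whose degree-$2$ abutment is $H^2_u(U,S)$) and the single-row spectral sequence $H^p(k,C^q)$.

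It remains to read off the degree-$2$ abutment of $\CK$, whose graded pieces lie in bidegrees $(1,1)$ and $(0,2)$. In bidegree $(1,1)$ one gets $\CK_\infty^{1,1}=H^1(k,H^1(\bar U,S))$ with no correction, since $C^1=0$ and there is no bottom row to receive a differential. The crux, and what I expect to be the main obstacle, is bidegree $(0,2)$: I must show the differential $d_2\colon\CK_2^{0,2}\to\CK_2^{2,1}=H^2(k,H^1(\bar U,S))$ vanishes on the summand $H^0(k,C^2)=\Hom_k(S_U\otimes_{\BZ/n}S_V,S)$. For this I would use that $\varepsilon'$ is defined over $k$, so each such $\phi$ lifts to a genuine class $\varepsilon'(\phi)\in\Ker(l_u^*)\subset H^2(U\times V,S)$ whose image in $H^2(\overline{U\times V},S)$ is $\phi$ in the $C^2$-summand; being in the image of the edge map, these are permanent cycles, whence $d_2$ kills $H^0(k,C^2)$, while on the complementary summand $H^0(k,H^2(\bar U,S))$ it coincides, through $p_1^*$, with the differential of the spectral sequence of $U$. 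A short filtration argument then yields $(p_1^*,\varepsilon')\colon H^2_u(U,S)\oplus\Hom_k(S_U\otimes_{\BZ/n}S_V,S)\iso\Ker(l_u^*)$, and combining with the splitting $H^2(U\times V,S)=\Im(p_2^*)\oplus\Ker(l_u^*)$ gives the asserted isomorphism.
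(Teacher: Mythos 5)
Your argument is correct, but it reaches the conclusion by a genuinely different route than the paper. The paper stays in the derived category: it forms the single morphism $\psi=(p_1^*+p_2^*,\,u^*+0)\colon Rq_{1,*}S\oplus Rq_{2,*}S\to (R(q_1\circ p_1)_*S)\oplus S$ and observes that Theorem \ref{BiDKthm} says exactly that the cone $C_\psi$ lies in $D^{\geq 2}(k)$ with $\CH^2(C_\psi)\cong\Hom_{k_s}(S_U\otimes_{\BZ/n}S_V,S)$ via $\varepsilon'|_{k_s}$; the hypercohomology long exact sequence then yields $0\to H^2_u(U,S)\oplus H^2(V,S)\to H^2(U\times V,S)\to H^0(k,\CH^2(C_\psi))$, and the $k$-rationality of $\varepsilon'$ splits the last arrow. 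You instead split off $\Im(p_2^*)$ by the section $l_u$ and chase the complementary summand of the Hochschild--Serre spectral sequence, which forces you to confront the differential $d_2\colon H^0(k,C^2)\to H^2(k,H^1(U_{k_s},S))$ explicitly; your permanent-cycle argument (classes in the image of $\varepsilon'$ lift to genuine classes over $k$, hence land in $E_\infty^{0,2}$ and are killed by no differential) is precisely the same input as the paper's splitting of the edge map, made visible at the level of differentials. The cone formulation is shorter and disposes of all contributions below degree $2$ at once; your version is more elementary and pinpoints exactly which differential could obstruct the statement and why it does not. Two remarks: your normalization $u^*[\CT_U]=0$ is genuinely needed in your setup (so that $l_u^*\circ\varepsilon'=0$ on the nose rather than modulo $\Im(p_2^*)$) and you reduce to it correctly; and your formula $\CK_2^{p,q}=H^p(k,H^q(U_{k_s},S))\oplus H^p(k,C^q)$ is only warranted for $q\le 2$, since Theorem \ref{BiDKthm} gives no surjectivity in degree $3$ --- this is harmless, as every entry and differential relevant to the degree-$2$ abutment has $q\le 2$.
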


\begin{proof}
Consid\'erons le morphisme dans $D^+(k)$:
$$\psi:= ( p_1^*+p_2^*, u^*+0) :Rq_{1,*}S\oplus Rq_{2,*}S \to (R(q_1\circ p_1)_*S)\oplus S$$
D'apr\`es le th\'eor\`eme \ref{BiDKthm}, le c\^one $C_{\psi}$ de $\psi$ est dans $D^{\geq 2}(k)$ et la composition
\begin{equation}\label{BiDKeq2}
\Hom_{k_s}(S_U\otimes_{\BZ/n}S_V,S)\xrightarrow{\varepsilon'|_{k_s}} H^2(U_{k_s}\times V_{k_s},S )=\CH^2 (R(q_1\circ p_1)_*S)\to \CH^2(C_{\psi})
\end{equation}
est un isomorphisme. Ceci induit une suite exacte
$$0\to  H^2_u(U,S)\oplus H^2(V,S)\xrightarrow{(p_1^*,p_2^*)} H^2(U\times V,S) \to  H^0(k,\CH^2(C_{\psi}) ).$$
D'apr\`es (\ref{BiDKeq2}), la composition  
 $$\Hom_k(S_U\otimes_{\BZ/n} S_V,S)\xrightarrow{\varepsilon'} H^2(U\times V,\mu_n)\to H^0(k,\CH^2(C_{\psi}) )$$
  est un isomorphisme, d'o\`u le r\'esultat.
\end{proof}

Le corollaire \ref{BiDKcor2.3.2} et la remarque \ref{BiDKrem} donnent directement \cite[Cor. 2.7]{C4}.

\begin{rem}
Dans \cite{Lv}, Lv utilise \cite[Prop. 2.6]{C4} pour \'etablir  \cite[Lem. 3.3]{Lv}. Dans sa d\'emonstration, \cite[Prop. 2.6]{C4} peut \^etre remplac\'e par le th\'eor\`eme \ref{BiDKthm} ci-dessus.
Donc tous les r\'esultats de \cite{Lv} restent corrects (sauf \cite[Lem. 3.2]{Lv}).
\end{rem}

\bibliographystyle{alpha}
\end{document}